\newtheorem{theorem}{Theorem}[section]
\newtheorem{proposition}[theorem]{Proposition}
\newtheorem{lemma}[theorem]{Lemma}
\theoremstyle{remark}
\newtheorem*{remark}{Remark}
\newcommand{\tO}{\mathtt 0}       
\newcommand{\tL}{\mathtt 1}       
\def\modd#1 #2{#1\ \mbox{\rm (mod}\ #2\mbox{\rm )}}
\begin{document}
\title[Modified Pascal triangle]%
{Continuants, run lengths, and Barry's modified Pascal triangle}
\author{Jeffrey Shallit}
\address{School of Computer Science, University of Waterloo, Waterloo, ON  N2L 3G1, Canada}
\email{shallit@cs.uwaterloo.ca}
\thanks{The first author is supported by a grant from NSERC.}
\author{Lukas Spiegelhofer}
\address{Institut f\"ur diskrete Mathematik und Geometrie,
Technische Universit\"at Wien,
Wiedner Hauptstrasse 8--10, 1040 Wien, Austria}
\email{lukas.spiegelhofer@tuwien.ac.at}
\thanks{The second author acknowledges support by 
project F5502-N26 (FWF), which is a part of the Special Research Program
``Quasi Monte Carlo Methods: Theory and Applications''.
Moreover, the second author thanks the Erwin Schr\"odinger Institute for Mathematics and Physics for providing the opportunity to carry out research work during his visit for the programme ``Tractability of High Dimensional Problems and Discrepancy''.
}
\keywords{modified Pascal triangle,
continued fraction, continuant, Stern's diatomic sequence}
\subjclass[2010]{Primary: 05A10; Secondary: 05A19, 11A55, 11A63}
\begin{abstract}
We show that the $n$'th diagonal sum of Barry's modified Pascal triangle
can be described as the continuant of the run lengths of the binary
representation of $n$.  We also obtain an explicit description for
the row sums.
\end{abstract}
\maketitle
\section{Introduction}
In 2006 in the {\it On-Line Encyclopedia of Integer Sequences} (OEIS) 
\cite{OEIS},
sequence \texttt{A119326}, Paul Barry
introduced a modified Pascal triangle, defined for integers
$0 \leq k \leq n$, as follows:
\[T(n,k)=\sum_{\substack{0\leq j\leq n-k\\2\mid j}}\binom{k}{j}\binom{n-k}{j}.\]
The first few rows of this triangle are as follows:
\begin{gather*}
1 \\
1 \qquad 1 \\
1 \qquad 1 \qquad 1 \\
1 \qquad 1 \qquad 1 \qquad 1  \\
1 \qquad 1 \qquad 2 \qquad 1 \qquad 1 \\
1 \qquad 1 \qquad 4 \qquad 4 \qquad 1 \qquad 1 \\
1 \qquad 1 \qquad 7 \qquad 10 \qquad 7 \qquad 1 \qquad 1 \\
1 \qquad 1 \qquad 11 \qquad 19 \qquad 19 \qquad 11 \qquad 1 \qquad 1 \\
1 \qquad 1 \qquad 16 \qquad 31 \qquad 38 \qquad 31 \qquad 16 \qquad 1 \qquad 1
\end{gather*}
\newpage
Similarly, one can consider $T(n,k) \bmod 2$, whose terms are given by
sequence \texttt{A114213}:
\begin{gather*}
1 \\
1 \qquad 1 \\
1 \qquad 1 \qquad 1 \\
1 \qquad 1 \qquad 1 \qquad 1  \\
1 \qquad 1 \qquad 0 \qquad 1 \qquad 1 \\
1 \qquad 1 \qquad 0 \qquad 0 \qquad 1 \qquad 1 \\
1 \qquad 1 \qquad 1 \qquad 0 \qquad 1 \qquad 1 \qquad 1 \\
1 \qquad 1 \qquad 1 \qquad 1 \qquad 1 \qquad 1 \qquad 1 \qquad 1 \\
1 \qquad 1 \qquad 0 \qquad 1 \qquad 0 \qquad 1 \qquad 0 \qquad 1 \qquad 1
\end{gather*}

Sequences \texttt{A114212} and \texttt{A114214},
respectively, are the row sums and
diagonal sums of this latter triangle.  We denote them by
$r(n)$ and $d(n)$, respectively:
\begin{align*}
r(n) &= \sum_{k=0}^n \left( T(n,k) \bmod 2 \right) \\
d(n)&=\sum_{k=0}^{\lfloor n/2 \rfloor} \left( T(n-k,k) \bmod 2 \right).
\end{align*}

In May 2016, the first author observed, empirically, a connection between
$d(n)$ and the binary representation of $n$.  In this note we prove this
connection, and also prove a formula for $r(n)$.  The connection
involves Stern's ``diatomic sequence'' $s(n)$, defined by
$s(0) = 0$, $s(1) = 1$, $s(2n) = s(n)$, and
$s(2n+1) = s(n) + s(n+1)$; see \cite{Stern:1858}.
\section{The diagonal sums}

Let the binary representation of $n$ be denoted by
$\sum_{i=0}^j \varepsilon_i (n) 2^i$.
We show that the diagonal sum $d(n)$ can be expressed in terms of this
representation.
Given a string $s$ of $0$'s and $1$'s,
we consider its {\it run lengths}:  
the lengths of maximal blocks of consecutive identical elements.  For example,
if $s = 111000011111$, then the run lengths of $s$ are $(3,4,5)$. 

If $m$ is a sequence of positive integers, we may associate an
integer with it via the continued fraction expansion: if $m=(m_0,\ldots,m_k)$, we say that the \emph{continuant} of $m$ is the numerator of the 
continued fraction $[m_0;m_1,\ldots,m_k]$ (see \cite[Ch.~34, \S 4]{Chrystal}).
\begin{theorem}\label{mainthm}
Let $n\geq 0$ be an integer and let $m$ be the sequence of
run lengths of the binary representation of $n$.
Then $d(n)$ equals the continuant of $m$.
\end{theorem}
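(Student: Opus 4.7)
The plan is to proceed in three stages: first reduce $T(n,k) \bmod 2$ to a bitwise condition, then rewrite $d(n)$ as an explicit lattice-point count, and finally verify the defining recurrence of the continuant by induction on the number of runs in the binary expansion of $n$.

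For the first stage, Lucas' theorem says that $\binom{k}{j}\binom{n-k}{j}$ is odd iff $j$ is a bitwise submask of both $k$ and $n-k$, i.e., $j \subseteq q := k \wedge (n-k)$, where $\wedge$ denotes bitwise AND. A short count of the even $j \subseteq q$ modulo $2$ yields
\[
T(n,k) \equiv 1 \pmod{2} \iff q \in \{0,1\}.
\]
Substituting into the definition of $d(n)$ gives the count formula
\[
d(n) = \#\bigl\{k : 0 \leq k \leq \lfloor n/2 \rfloor,\ k \wedge (n-2k) \in \{0,1\}\bigr\},
\]
and splitting $k$ by its lowest bit (comparing the cases $n$ even and $n$ odd) shows the useful symmetry $d(2n) = d(2n+1)$.

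For the third stage, let $(m_0, \ldots, m_\ell)$ be the run lengths of $n$ read from the most significant bit. Set $n_1 := \lfloor n/2^{m_\ell}\rfloor$ and $n_2 := \lfloor n_1 / 2^{m_{\ell-1}}\rfloor$, so that the run sequences of $n_1$ and $n_2$ are $(m_0, \ldots, m_{\ell-1})$ and $(m_0, \ldots, m_{\ell-2})$ respectively (with $n_2 = 0$ when $\ell = 1$). The main step is to prove, for $\ell \geq 1$, the recurrence
\[
d(n) = m_\ell\, d(n_1) + d(n_2).
\]
Together with the base cases $d(0) = 1$ and $d(2^{m_0}-1) = m_0$ (verified by direct count), and with the continuant recurrence $K(m_0, \ldots, m_\ell) = m_\ell K(m_0, \ldots, m_{\ell-1}) + K(m_0, \ldots, m_{\ell-2})$, induction on $\ell$ then yields $d(n) = K(m_0, \ldots, m_\ell)$. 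The recurrence itself would be proved by partitioning the admissible $k$'s according to how the relevant bits of $k$ interact with the last two runs of the binary expansion of $n$: configurations whose bits fall inside the last run of $n$ form orbits of size $m_\ell$, each corresponding to an admissible configuration for $n_1$; the ``overflowing'' configurations are in bijection with those counted by $d(n_2)$.

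The main obstacle is the bijective argument underlying this recurrence. One must distinguish cases according to the parity of $\ell$ (whether the trailing run of $n$ is of $1$s or of $0$s), and carefully verify that the constraint $k \wedge (n-2k) \in \{0,1\}$ is preserved under the relevant orbit structure and bijection. A less direct alternative is to first establish the Stern-type identities $d(4n) = d(2n)$ and $d(4n+2) = d(2n) + d(2n+2)$ directly from the count formula, deduce $d(2n) = s(n) + s(n+1)$ by comparison with the defining recursion of Stern's diatomic sequence, and then invoke the classical formula expressing $s(n) + s(n+1)$ as a continuant of the run lengths of the binary expansion of $2n$ to finish.
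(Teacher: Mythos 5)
Your first two stages are correct: by Lucas' theorem $\binom{k}{j}\binom{n-k}{j}$ is odd exactly for the submasks $j$ of $q=k\wedge(n-k)$, the even such submasks number $2^{s_2(q)}$ or $2^{s_2(q)-1}$ according to the lowest bit of $q$, and so $T(n,k)$ is odd iff $q\in\{0,1\}$; this is equivalent to the paper's Proposition~\ref{prp_reduction}, and the deduction $d(2n)=d(2n+1)$ goes through. The problem is that everything after that point is either unproved or false. The recurrence $d(n)=m_\ell\,d(n_1)+d(n_2)$ is indeed the statement that would finish the proof, but you only gesture at the ``orbit'' and ``overflow'' bijections and yourself identify this as the main obstacle; no part of that combinatorial argument is actually carried out, and it is where essentially all of the difficulty of the theorem lives. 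A proof that stops at ``partition the admissible $k$'s according to how their bits interact with the last two runs'' has not yet engaged with the hard case analysis (parity of $\ell$, runs of length $1$, the boundary between the last two runs), so the main route is a plan, not a proof.

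The fallback route is worse: the identities you propose are false. From the count formula one gets $d(0)=1$, $d(2)=2$, $d(4)=3$, so $d(4n)=d(2n)$ already fails at $n=1$, and $d(4n+2)=d(2n)+d(2n+2)$ fails at $n=0$ (it would give $2=1+2$). The underlying issue is that $d(2n)=s(2n+1)=s(n)+s(n+1)$ samples Stern's sequence only at odd arguments, and this subsequence does \emph{not} satisfy a self-contained Stern-type recursion: $s(4n+1)=s(2n)+s(2n+1)$ involves $s(2n)=s(n)$, which cannot be rewritten in terms of $d$ alone (this is consistent with the remark in the paper, where $d$ needs rank-$3$ linear relations to be $2$-regular). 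The paper sidesteps this by invoking Carlitz's \emph{closed-form} identity $s(n+1)=\sum_{k\le n/2}\bigl(\binom{n-k}{k}\bmod 2\bigr)$, which combined with Proposition~\ref{prp_reduction} gives $d(2n)=d(2n+1)=s(2n+1)$ directly, and then quotes the classical continuant formula for $s$ at odd arguments, finishing even $n$ with the identity $[m_0;\ldots,m_k]=[m_0;\ldots,m_k-1,1]$. If you want to salvage your write-up, either carry out the run-length induction in full detail, or replace the recursion-matching step by Carlitz's summation identity.
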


We will use Lucas' famous congruence for binomial
coefficients \cite[p.~230]{Lucas:1878}:
if $p$ is a prime number and $n=(n_\nu\cdots n_0)_p$ and $k=(k_\nu\cdots k_0)_p$, then
\[\binom nk\equiv \binom{n_\nu}{k_\nu}\cdots \binom{n_0}{k_0}\pmod p . \]
This implies that $\binom nk$ is not divisible by $p$ if and only if $k_i\leq n_i$ for all $i$.
Moreover, it follows that the number of odd binomial coefficients $\binom nk$ equals $2^{s_2(n)}$, where $s_2$ is the binary sum-of-digits function
\cite{Glaisher:1899}.

We prove the following statement, which reduces the problem to divisibility by $2$ of binomial coefficients.
We will derive Theorem~\ref{mainthm} from it in a moment.
\begin{proposition}\label{prp_reduction}
Let $n$ and $k$ be nonnegative integers such that $k\leq n$.
If $2\mid n+k$, then $T(n,k)\equiv \modd{\binom nk} {2}$.
Otherwise, $T(n,k)\equiv \modd{\binom{n-1}k} {2}$.
\end{proposition}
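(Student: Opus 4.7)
The plan is to compute $T(n,k)\bmod 2$ directly using Lucas' theorem and then recognise the answer as $\binom{n^{*}}{k}\bmod 2$ for an appropriate $n^{*}\in\{n,n-1\}$. For a nonnegative integer $m$ I will write $B(m):=\{i\ge 0:\varepsilon_i(m)=1\}$ for the set of $1$-bit positions of $m$.

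By Lucas' theorem the summand $\binom{k}{j}\binom{n-k}{j}$ is odd precisely when $B(j)\subseteq B(k)\cap B(n-k)$. Setting $C:=B(k)\cap B(n-k)$, the quantity $T(n,k)\bmod 2$ is therefore the parity of the number of \emph{even} indices $j$ with $B(j)\subseteq C$, and this number is exactly $2^{|C\setminus\{0\}|}$. Hence
\[
T(n,k)\equiv 1\pmod 2\quad\Longleftrightarrow\quad C\subseteq\{0\}.
\]

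Next I set $n^{*}:=n$ if $n+k$ is even and $n^{*}:=n-1$ if $n+k$ is odd, so that $n^{*}-k$ is always even and the proposition becomes the single assertion $T(n,k)\equiv\binom{n^{*}}{k}\pmod 2$. The elementary identity $B(n^{*}-k)=B(n-k)\setminus\{0\}$ then holds in either case: if $n-k$ is even, $0\notin B(n-k)$ already; if $n-k$ is odd, then $n^{*}-k=(n-k)-1$ is obtained from $n-k$ by clearing its $0$-th bit.

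Finally, a standard consequence of Lucas' theorem---equivalently, Kummer's theorem on carries in the addition $k+(n^{*}-k)=n^{*}$---states that $\binom{n^{*}}{k}$ is odd if and only if $B(k)\cap B(n^{*}-k)=\emptyset$. By the previous step this is equivalent to $C\setminus\{0\}=\emptyset$, i.e.\ to $C\subseteq\{0\}$, which by the first step is the condition for $T(n,k)$ to be odd. This yields $T(n,k)\equiv\binom{n^{*}}{k}\pmod 2$, as required. The main obstacle I anticipate is the bit-$0$ bookkeeping involved in passing from $n$ to $n^{*}$; once that is handled, the argument is purely digit-combinatorial and needs no case analysis beyond the parity of $n-k$.
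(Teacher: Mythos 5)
Your proof is correct. Both you and the paper rest on the same key fact (Lucas' congruence, in your case phrased partly via Kummer's carry criterion), but the organization is genuinely different. The paper splits into two cases: it first proves the statement when $n-k$ is even, by substituting $n\mapsto n+k$, extending the sum to all $j$ (legitimate there because $n\wedge k$ is even, so every surviving submask $j$ is automatically even), and comparing the resulting count $2^{s_2(n\wedge k)}$ with the parity of $\binom{n+k}{k}$; it then reduces the odd case to the even one via the auxiliary identity $\binom{m}{j}\equiv\binom{m+1}{j}\pmod 2$ for $2\mid m$, $2\mid j$, which shows $T(n,k)\equiv T(n-1,k)$. You instead handle both parities at once: you observe that the restriction to even $j$ means exactly that bit $0$ of $C=B(k)\cap B(n-k)$ never matters, so $T(n,k)\equiv 2^{\lvert C\setminus\{0\}\rvert}\pmod 2$, and that passing from $n$ to $n^{*}$ deletes precisely bit $0$ from $B(n-k)$, after which Kummer's criterion for $\binom{n^{*}}{k}$ matches the condition $C\setminus\{0\}=\emptyset$ on the nose. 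Your version buys a uniform, case-free argument (the only bookkeeping is the bit-$0$ identity $B(n^{*}-k)=B(n-k)\setminus\{0\}$, which you justify correctly), while the paper's version isolates a reusable reduction $T(n,k)\equiv T(n-1,k)$ for odd $n-k$, which it exploits again later (e.g.\ to get $d(2n)=d(2n+1)$). Both are complete proofs; yours is arguably the cleaner route to the proposition itself.
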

\begin{proof}
We prove the first statement. By replacing $n$ with $n+k$ we get
the equivalent assertion that if $2\mid n$ or $2\mid k$, then
\begin{equation}\label{eqn_sufficient}
\sum_{\substack{0 \leq j \leq n\\2\mid j}}
\binom nj\binom kj\equiv \binom {n+k}k\pmod 2 .
\end{equation}
By Lucas' congruence the left-hand side is congruent to
\[\sum_{j=0}^{n}\binom nj\binom kj\equiv\sum_{j=0}^{n}\binom {n\wedge k}j
\equiv 2^{s_2(n\wedge k)} \pmod 2,
\]
where $n\wedge k$ is the integer whose binary digits satisfy $\varepsilon_i(n\wedge k)=\min(\varepsilon_i(n),\varepsilon_i(k))$.
This expression is odd if and only if $s_2(n\wedge k)=0$, which is the case if and only if the binary representations of $n$ and $k$ are disjoint.
To handle the right-hand side of Eq.~\eqref{eqn_sufficient}, we note that $\binom{n+k}{k}$ is odd if $n\wedge k=1$.
On the other hand,
if the binary representations of $n$ and $k$ are not disjoint,
then the condition $\varepsilon_i(k)\leq \varepsilon_i(n+k)$ is 
violated for
$i=\min\{j:\varepsilon_j(n)=1,
\varepsilon_j(k)=1\}$; therefore $\binom{n+k}{k}$ is even.
This proves the first assertion.

For the second assertion, we use Lucas' congruence again: for $2\mid j$ and $2\mid m$ we have $\binom mj\equiv \binom{m+1}j\pmod 2$. Since $2\nmid n-k$, we obtain $\binom{n-k}j\equiv\binom{n-1-k}j\pmod 2$.
Moreover, by $2\nmid n-k$ the ranges of summation in $T(n,k)$ and $T(n-1,k)$ are the same.
\end{proof}
From this proposition we obtain in particular the identity
\begin{equation}\label{eqn_double}
d(2n)=d(2n+1).
\end{equation}

Carlitz \cite{Carlitz:1962} proved
that Stern's diatomic sequence $s(n)$ satisfies 
$s(n+1)=\sum_{k=0}^{\lfloor n/2\rfloor} \left( \binom{n-k}k \bmod 2 \right)$.
By Proposition~\ref{prp_reduction} and Eq.~\eqref{eqn_double}
we therefore have
\begin{equation}\label{eqn_stern_relation}
d(2n)=d(2n+1)=s(2n+1).
\end{equation}

It is well-known~\cite{L1929, L1969} that
if $m=(m_0,\ldots,m_k)$ is the sequence of run-lengths of the binary 
representation of $n$ and $n$ is odd,
then $s(n)$ is the continuant of $m$.
Therefore $d(n)$ is the continuant of $m$.
In order to complete the proof of the conjecture,
we have to show that the same is true for even $n$.
By Eq.~\eqref{eqn_stern_relation} it is sufficient to prove the following lemma.

\begin{lemma}
If $n$ is even, then the continuant of the sequence of run-lengths of the binary representation of $n$ is equal to the continuant corresponding to $n+1$.
\end{lemma}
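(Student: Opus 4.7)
Write $K(a_0,\ldots,a_k)$ for the continuant of the sequence $(a_0,\ldots,a_k)$. The plan is to analyze how the run-length sequence of the binary representation transforms when an even integer $n$ is replaced by $n+1$, and then to invoke the classical continuant identity $K(a_0,\ldots,a_{k-1},a_k,1) = K(a_0,\ldots,a_{k-1},a_k+1)$, which follows immediately from the recurrence $K(a_0,\ldots,a_k) = a_k K(a_0,\ldots,a_{k-1}) + K(a_0,\ldots,a_{k-2})$.

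First I would dispose of $n = 0$, where the run-length sequence is empty (continuant $1$) and $n+1 = 1$ has run lengths $(1)$ with continuant $1$. For $n \geq 2$ even, the binary representation of $n$ ends in one or more zeros, so the run-length sequence has the form $m = (m_0,m_1,\ldots,m_{2\ell-1})$ with $m_{2\ell-1} \geq 1$ counting the trailing zeros (there is an even number of runs, since the representation begins with a $1$-run and ends with a $0$-run). Since $n+1$ differs from $n$ only in the last bit, two sub-cases arise. If $m_{2\ell-1} = 1$, the single trailing zero of $n$ becomes a $1$, merging with the preceding $1$-run, so $n+1$ has run lengths $(m_0,\ldots,m_{2\ell-3},m_{2\ell-2}+1)$. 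If $m_{2\ell-1} \geq 2$, a new $1$-run of length one is created at the end while the zero-run shortens, giving $(m_0,\ldots,m_{2\ell-2},m_{2\ell-1}-1,1)$.

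Applying the ``append a $1$'' identity finishes each case. In the first case it gives $K(m_0,\ldots,m_{2\ell-2},1) = K(m_0,\ldots,m_{2\ell-3},m_{2\ell-2}+1)$, equating the continuants for $n$ and $n+1$. In the second case the same identity yields $K(m_0,\ldots,m_{2\ell-2},m_{2\ell-1}-1,1) = K(m_0,\ldots,m_{2\ell-2},m_{2\ell-1})$, again matching the two continuants.

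I do not expect any genuine obstacle: the whole argument is a short combinatorial case analysis combined with a standard one-line continuant identity. The only care needed is the parity of the number of runs (ensuring that Case~1 is well posed even in the smallest instances such as $n = 2$, where the formula collapses to the single entry $m_{2\ell-2}+1$) and the trivial boundary case $n = 0$.
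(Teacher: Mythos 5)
Your proof is correct and follows essentially the same route as the paper's: the same case split on whether the trailing run of zeros has length $1$ or at least $2$, and the same continuant identity $K(a_0,\ldots,a_k,1)=K(a_0,\ldots,a_k+1)$ (stated in the paper in continued-fraction form). Your explicit treatment of $n=0$ and of the parity of the number of runs is a small additional care the paper leaves implicit, but it does not change the argument.
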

\begin{proof}
Let $n=\tL^{m_0}\tO^{m_1}\cdots \tL^{m_{k-1}}\tO^{m_k}$.
We distinguish between two cases. If $m_k=1$, then
$n+1=(\tL^{m_0}\tO^{m_1}\cdots \tO^{m_{k-2}}\tL^{m_{k-1}+1})$
and the statement follows from the identity $[m_0;m_1,\ldots,m_{k-1},1]=[m_0;m_1,\ldots,m_{k-1}+1]$. If $m_k\geq 2$, then
$n+1=(\tL^{m_0}\tO^{m_1}\cdots \tO^{m_{k-2}}\tL^{m_{k-1}}\tO^{m_k-1}1)$ and the statement follows from 
$[m_0;m_1,\ldots,m_k]=[m_0;m_1,\ldots,m_{k-1},m_k-1,1]$.
\end{proof}

\begin{remark}
The sequence $(d(n))_{n\geq 0}$ is a $2$-regular sequence 
\cite{Allouche&Shallit:1992}, as it satisfies the equalities
\begin{align*}
d(2n+1) &= d(2n) \\
d(4n+2) &= 3d(2n) - d(4n) \\
d(8n) &= -d(2n) + 2 d(4n) \\
d(8n+4) &= 4d(2n) - d(4n) .
\end{align*}
\end{remark}
\section{The row sums}

We will prove

\begin{theorem}
$$r(n)= \begin{cases}
	2^{s_2 (n)}, & \text{if $n$ odd}; \\
	2^{s_2(n)} +2^{s_2(n-2)}, & \text{if $n$ even}.
\end{cases}
$$
\end{theorem}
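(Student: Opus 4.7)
The plan is to apply Proposition~\ref{prp_reduction} to the definition of $r(n)$, reducing the row-sum to a count of odd binomial coefficients subject to a parity restriction on the lower index, and then to evaluate this count via Lucas' congruence.

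Writing $r(n) = A(n) + B(n)$, where $A(n)$ is the number of $k \in [0,n]$ with $n+k$ even and $\binom{n}{k}$ odd, and $B(n)$ is the number of $k \in [0,n]$ with $n+k$ odd and $\binom{n-1}{k}$ odd, the task reduces to evaluating $A(n)$ and $B(n)$ separately.

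The key tool is the well-known consequence of Lucas' congruence already used in the proof of Proposition~\ref{prp_reduction}: the indices $k$ for which $\binom{m}{k}$ is odd correspond bijectively to subsets of the $1$-positions of the binary expansion of $m$, and their number is therefore $2^{s_2(m)}$. The one subtlety is the parity restriction on $k$: if $\varepsilon_0(m) = 0$, every admissible $k$ is automatically even, whereas if $\varepsilon_0(m) = 1$, exactly $2^{s_2(m)-1}$ admissible $k$ of each parity occur.

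A case split on the parity of $n$ now finishes matters. For $n$ odd, $\varepsilon_0(n)=1$ and $\varepsilon_0(n-1)=0$, so $A(n)$ picks up half of the odd entries of $\binom{n}{\cdot}$ while $B(n)$ picks up all of the odd entries of $\binom{n-1}{\cdot}$; using $s_2(n-1)=s_2(n)-1$, each equals $2^{s_2(n)-1}$ and the total is $2^{s_2(n)}$. For $n$ even, $\varepsilon_0(n)=0$ and $\varepsilon_0(n-1)=1$, so $A(n)=2^{s_2(n)}$ and $B(n)=2^{s_2(n-1)-1}$; since $n-1$ is odd, $s_2(n-2)=s_2(n-1)-1$, giving $B(n)=2^{s_2(n-2)}$ and completing the proof (the degenerate case $n=0$ is verified directly). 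The only obstacle is careful bookkeeping of the parity condition on $k$ during the counting; no deeper difficulty arises, since Proposition~\ref{prp_reduction} has already done the hard work of reducing $T(n,k) \bmod 2$ to ordinary binomial coefficients modulo $2$.
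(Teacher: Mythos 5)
Your proof is correct and takes essentially the same route as the paper: both apply Proposition~\ref{prp_reduction} to replace $T(n,k)\bmod 2$ by $\binom{n}{k}$ or $\binom{n-1}{k}$ according to the parity of $n+k$, and then count the odd binomial coefficients via Lucas' congruence and the $2^{s_2(\cdot)}$ count. The only cosmetic difference is bookkeeping: the paper halves the indices (reducing, e.g., $\binom{2m}{2k}$ to $\binom{m}{k}$) so each piece becomes a full row count, whereas you count parity-restricted submasks of $n$ and $n-1$ directly; the two tallies are identical.
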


A similar characterization was stated, without proof or attribution,
in the notes to \texttt{A114212} of the OEIS.

\begin{proof}
From Proposition~\ref{prp_reduction} we get,
for integers $n \geq k \geq 0$, that
\begin{align*}
T(2n,2k) &\equiv T(2n+1,2k) \equiv T(2n+1,2k+1) \equiv \modd{{n \choose k}} {2} ; \\
T(2n,2k+1)&\equiv \modd{ { {n-1} \choose k}} {2} . 
\end{align*}
Then
\begin{align*}
r(2m) &= \sum_{k=0}^{2m} \left( T(2m,k) \bmod 2 \right) \\ 
&=  \sum_{k=0}^m \left( T(2m,2k) \bmod 2 \right)  
\quad + \quad
\sum_{k=0}^{m-1} \left( T(2m, 2k+1) \bmod 2 \right)  \\
&=  \sum_{k=0}^m \left( {m \choose k} \bmod 2 \right) 
\quad + \quad
\sum_{k=0}^{m-1} \left( {{m-1} \choose k} \bmod 2 \right)  \\
&= 2^{s_2 (m)} + 2^{s_2 (m-1)}  \\
&= 2^{s_2 (2m)} + 2^{s_2 (2m-2)}. 
\end{align*}
Similarly,
\begin{align*}
r(2m+1) &= \sum_{k=0}^{2m+1} \left( T(2m+1,k) \bmod 2 \right) \\
&=  \sum_{k=0}^m \left( T(2m+1,2k) \bmod 2 \right) 
\quad + \quad
 \sum_{k=0}^m \left( T(2m+1, 2k+1) \bmod 2 \right)  \\
&=  \sum_{k=0}^m \left( {m \choose k} \bmod 2 \right) 
\quad + \quad
 \sum_{k=0}^m \left( {m \choose k} \bmod 2 \right)   \\
&= 2^{s_2 (m)} + 2^{s_2 (m)} \\
&= 2^{s_2 (2m+1)} .
\end{align*}
\end{proof}
\bibliographystyle{siam}
\bibliography{A114214}
\end{document}